\documentclass[10pt,a4paper]{article}
\usepackage[utf8]{inputenc}
\usepackage[T1]{fontenc}
\usepackage{lineno,hyperref}
\usepackage{amssymb}
\usepackage{amssymb}
\usepackage{amsthm}
\usepackage{mathtools}
\usepackage{tikz}
\usepackage{tikz-cd}
\usepackage{graphicx}
\usepackage{float}

\theoremstyle{definition} 
\newtheorem{defi}{Definition}

\theoremstyle{remark} 
\newtheorem{rem}{Remark}
\newtheorem{cro}{Corollary}
\newtheorem*{notation}{Notation}
\newtheorem*{discussion}{Discussion}
\newtheorem*{outline}{Outline}
\newtheorem*{convention}{Convention}

\newcommand{\obj}{Obj}
\author{Fatimah Rita Ahmadi \\
	\multicolumn{1}{p{.7\textwidth}}{\centering\emph{Department of Computer Science\\University of Oxford}}}
\title{Monoidal 2-Categories: A Review}
\begin{document}
\maketitle
\begin{abstract}
We review the complete definition of monoidal 2-categories and recover Kapranov and Voevodsky’s definition from the algebraic definition of weak 3-category(or tricategory). 
\end{abstract}
\section{Introduction}
Kapranov and Voevodsky, in their seminal paper \cite{KV}, introduced monoidal 2-categories. Their definition comprises a list of axioms, the categorical origin of which does not seem transparent.

A monoidal n-category, by the categorical procedure, can be obtained by taking a one-object weak n + 1-category. For instance, a monoidal category is defined as one object weak 2-category(or bicategory). Due to the lack of a complete definition for weak n-categories, $n$ cannot be any arbitrary natural number. However, thanks to the algebraic definition of weak 3-category(or tricategory) introduced by Gordon, Power and Street \cite{Gordon} and Gurski \cite{Gurski}, one can define a monoidal 2-category as a tricategory with one object.

After a full list of necessary data, Gurski’s thesis gives two main axioms, namely, Stasheﬀ and unit polytopes: the 2-dimensional correspondence of pentagonal and triangle equations for monoidal categories. However, it does not explicitly spell out all diagrams obtainable by naturality, modification, and (2-)functoriality conditions. Unpacking these conditions, one can recover KV’s axioms, which underlines the goal of this paper.

The recovery procedure needs to consider two main issues. The first one is the difference between tensorators in KV’s version and our version. Kapranov and Voevodsky defined three tensorators which can be obtained by a single tensorator of the tricategory approach. The second difference is an extra piece of data given by KV which can be written based on other data: a 2-morphism $\epsilon$ between left and right 1-unitors indexed by the unit object, i.e., $r_I, l_I: I \otimes  I \longrightarrow I$. To show it is redundant, we present a proof sketch in the comparison section \ref{sec:extra_data}.

\begin{discussion}\label{discussion}
Baez and Neuchl \cite{baez} reviewed the semi-strict definition of monoidal 2-category. Stay, on the other hand, spelt out the definition but without tensorators \cite{Stay}. Schommer Pries cited Stay \cite{schom}. The combination of both constructs the full description of a monoidal 2-category.  

Moreover, Stay listed four unit polytopes by alternating the location of the unit object, but only two of them are necessary, namely when the unit object is the second or third object in the polytope. As Gurski proved the other two are corollary of these axioms. Note also Stay's diagrams, in our case, need to be revised, as in the presence of tensorators, filling 2-morphisms will be modified based on the modified tensor product. 
\end{discussion}
\begin{rem}
Higher categories and particularity 2-categories have gained attention in topological quantum field theory and topological condensed matter physics, for instance, Kitaev and Kong modelled gapped boundaries in Levin-Wen model by the bicategory of module categories over fusion category of the bulk \cite{Kitaev}. The current review tries to be accessible for physicists. Hence, some readers will encounter some parts which may seem excessive, for example, items 3 and 4 in Definition \ref{sec:monoidal-data}. 
\end{rem}
\begin{rem}
For a concise review of bicategories consult Leinster's paper \cite{Leinster}. By a 2-category, we mean a strict 2-category. In other words, a bicategory whose associators and unitors are identities.
\end{rem}
\begin{outline}
The paper is organized in the following way: having in mind both
KV’s and Gurski’s definitions, Section \ref{sec:def-monoidal} presents the definition of monoidal 2-category. So you will see the same structures and diagrams as KV whose origins are explained in Gurski’s tricategory language. We ﬁrst list the data \ref{sec:monoidal-data}; then in Subsection \ref{sec:conditions}, we give all conditions under three different groups: Naturality Conditions \ref{sec:natural}, Modification Conditions \ref{sec:modification} and Axioms \ref{sec:axioms}. The diagrams of these subsections are depicted in Appendix \ref{sec:figures}. The paper concludes with Section \ref{sec:compar} in which we compare the produced definition with the original definition given by Kapranov and Voevodsky. Appendices present a minimal account of the definitions of 2-functors Appendix \ref{sec:2-functor} modification and an explicit procedure for obtaining modification diagrams for pentagonator and 2-unitors Appendix \ref{sec:modification-data}.	
\end{outline}
\section{Acknowledgments}
The author thanks Steve Simon for his encouragement to prepare this material as a review paper. 
\section{Definition of Monoidal 2-Categories} \label{sec:def-monoidal}
\begin{notation}
Capital letters, $A, B, ..$ are reserved for objects, small letters $f, g, ...$ for 1-morphisms and Greek letters $\alpha, \beta, ...$ for 2-morphisms. Horizontal composition of 1- and 2-morphisms is denoted by juxtaposition and vertical composition of 2-morphisms by $\odot$. For every object $A$, the identity 1-morphism or 1-identity is shown by $id_A$, but in the tensor product of $id_A$ with 1-morphisms $id_A \otimes f$, we leave out id and denote it by $A \otimes f$. For every 1-morphism $f$, the identity 2-morphism or 2-identity is represented by $1_f$ . Whenever, it is clear from the context, in the horizontal composition of 1-identities with 2-morphisms, we leave out 1 and use whiskering convention $1_f \circ \alpha = f\alpha$. 
\end{notation}

\begin{convention}
Following the style of the stunning figures of Stay’s paper, we color diagrams. Shapes ﬁlled by pink are penetrators $\pi$, those by blue are tensorators $\phi$, and those ﬁlled by brown are 2-unitors. Unfilled ones are natural 2-isomorphisms.	
\end{convention}

\begin{defi}
A monoidal 2-category is a 2-category $\mathcal{C}$ with a 2-functor $\otimes: \mathcal{C} \times \mathcal{C} \longrightarrow \mathcal{C}$  and a list of data given in Section \ref{sec:monoidal-data} subject to some conditions presented in Section \ref{sec:conditions}.
 \subsection{Data}\label{sec:monoidal-data}
	\begin{enumerate}
	\item A \textit{unit object} $I \in \obj(\mathcal{C})$.
	\item For every pair of objects $A$, $B$ in $\obj(\mathcal{C})$, the \textit{tensor product of objects} is an object $A \otimes B \in \obj( \mathcal{C})$ denoted by juxtaposition $AB$. 
	\item For 1-morphisms $A \xrightarrow{f} B$ and  $C \xrightarrow{g} D$, the \textit{tensor product of 1-morphisms} defined as $A \otimes C \xrightarrow{f \otimes g} B \otimes D$ exists. 
	\item For every pair of 2-morphisms, $\alpha: f\Rightarrow h$, $\beta: g \Rightarrow k$,  and 1-morphisms $f, h:A \longrightarrow B$ and $g, k: C \longrightarrow D$, there exists a \textit{tensor product of  2-morphism} $\alpha \otimes \beta: f \otimes g \Rightarrow h \otimes k$ such that $f\otimes g , h \otimes k \in Hom(A\otimes C, B \otimes D)$. 
	\item Functoriality of 2-functor $\otimes$ holds up to natural 2-isomorphisms, meaning, it preserves composition of 1-morphisms and identity 1-morphisms up to natural 2-isomorphisms Appendix \ref{sec:2-functor}.
	\begin{itemize}
		\item 	For every quadruple of 1-morphisms $(f, f', g, g')$, such that $f: A \longrightarrow A^\prime,f^\prime: A^{\prime} \longrightarrow A^{\prime \prime}, g: B \longrightarrow B^\prime, g^\prime: B^\prime \longrightarrow B^{\prime \prime}$, a natural 2-isomorphism called \textit{tensorator}:
		\begin{equation}\label{tensorator-1}
			\phi_{f^\prime, g^\prime, f, g}: (f^\prime \otimes g^\prime) \circ (f \otimes g) \Rightarrow (f^\prime \circ  f) \otimes (g^\prime \circ g)
		\end{equation}
In addition to naturality condition, it has to satisfy Equations \ref{tensorator:axiom1}. Note that KV’s definition introduces three different tensorators which the tricategory approach packs in one single tensorator denoted by $\phi$, for further discussion check Section \ref{sec:tensorators}.
		\item
		For every pair of objects $A, B$, we also need a natural 2-isomorphism:
		\begin{equation}
			\phi_{A, B} : id_A \otimes id_B \Rightarrow id_{A \otimes B}
		\end{equation}
For our purpose of recovering KV’s definition, we let $\phi_{A,B}$ be identity 2-morphisms. This assumption has a consequence which we elaborate on in Section \ref{sec:consequence}. 
\end{itemize}
\item For every triplet of objects $A, B, C$, a natural isomorphism called \textit{1-associator}:
$$a: \otimes (\otimes \times \mathbf{Id}) \longrightarrow \otimes (\mathbf{Id} \times \otimes )$$
Since $a$ is between two 2-functors, it consists of two pieces of data: a 1-isomorphism indexed by three objects mentioned earlier and a natural 2-isomorphism indexed by at least one 1-morphism. For instance, for $f: A \longrightarrow A^\prime$ there exists a natural 2-isomorphism $\alpha_{f, B, C}$ subject to the naturality conditions in the 1-morphism $f$, objects $B$ and $C$, and also Axiom \ref{eq:transformation1}.
\begin{equation}
	\begin{tikzpicture}
		\node (0) at (0, 0) {$(A B)C$};
		\node (1) at (3, 0) {$A (BC)$};
		\node (2) at (0, 2){$(A^\prime B)C$};
		\node (3) at (3, 2) {$A^\prime(BC)$};
		\node[rotate=150] (4) at (1.5, 1) {$\Rightarrow$};
		\node (5) at (1.5, 1.5){$\alpha_{f, B, C}$};
		\draw[->] (0) to node[below]{$a_{A, B, C}$} (1);
		\draw[->] (0) to node[left]{$(f\otimes B)\otimes C$} (2);
		\draw[->] (1) to node[right]{$f\otimes (B\otimes C)$} (3);
		\draw[->] (2) to node[above]{$a_{A^\prime, B, C}$} (3);
	\end{tikzpicture}
\end{equation}
\item For every object $A$, a natural isomorphism called \textit{left 1-unitor},
$$l_A: I \otimes -  \longrightarrow \mathbf{Id} $$
Similar to 1-associators, because $l$ is a natural 2-transformation between two 2-functors $I \otimes -$ and $\mathbf{Id}$, there should be a natural 2-isomorphism indexed by a 1-morphism, $l_f$, which further satisfies the naturality condition and Axiom \ref{eq:transformation1}. 
\begin{equation}
	\begin{tikzpicture}
		\node (0) at (0, 0) {$IA$};
		\node (1) at (3, 0) {$IA^\prime $};
		\node (2) at (0, 2){$A$};
		\node (3) at (3, 2) {$A^\prime $};
		\node[rotate=150] (4) at (1.5, 1) {$\Rightarrow$};
		\node (5) at (1.5, 1.5){$l_f$};
		\draw[->] (0) to node[below]{$I \otimes f$} (1);
		\draw[->] (0) to node[left]{$l_A$} (2);
		\draw[->] (1) to node[right]{$l_{A^\prime }$} (3);
		\draw[->] (2) to node[above]{$f $} (3);
	\end{tikzpicture}
\end{equation}
\item For every object $A$, a natural isomorphism called \textit{right 1-unitor}:
$$r: - \otimes I \longrightarrow \mathbf{Id}$$
This includes a natural 1-isomorphism $r_A: A \otimes I \longrightarrow A$, and a natural 2-isomorphism $r_f$ subject to the naturality conditions and Axiom \ref{eq:transformation1}. 
\begin{equation}
	\begin{tikzpicture}
		\node (0) at (0, 0) {$AI$};
		\node (1) at (3, 0) {$A^\prime I$};
		\node (2) at (0, 2){$A$};
		\node (3) at (3, 2) {$A^\prime $};
		\node[rotate=150] (4) at (1.5, 1) {$\Rightarrow$};
		\node (5) at (1.5, 1.5){$r_f$};
		\draw[->] (0) to node[below]{$f \otimes I$} (1);
		\draw[->] (0) to node[left]{$r_A$} (2);
		\draw[->] (1) to node[right]{$r_{A^\prime }$} (3);
		\draw[->] (2) to node[above]{$f $} (3);
	\end{tikzpicture}
\end{equation}
		\item For every four objects $A, B, C, D$, there is a modification between composition of 1-associators called pentagonator $\pi_{A, B, C, D}$ shown in Figure \ref{fig:Pink}. Note that $\pi$ is not natural but it should satisfies the modification condition Equation \ref{modification-square}.
\begin{equation}\label{fig:Pink}
	\begin{tikzpicture}
		\filldraw[white,fill=red,fill opacity=0.1](0,3)--(2,4)--(4,2)--(2,0)--(0,1)--cycle;
		\node (1) at (0,3) {$((A B) C) D$};
		\node (2) at (2,4) {$(A B) (C D)$};
		\node (3) at (4,2) {$A (B (C D))$};
		\node (4) at (2,0) {$A ((B C) D)$};
		\node (5) at (0,1) {$(A (B C)) D$};
		\node[rotate=90] (6) at (1,2) {$\Rightarrow$};
		\node (7) at (1.9,2) {$\pi_{A, B, C, D}$};
		
\draw[->] (1) to node[above left] {$a$} (2);
\draw[->] (2) to node [above right] {$a$} (3);
\draw[->] (5) to node [below left] {$ a$} (4);
\draw[->] (4) to node [below right] {$A \otimes a$} (3);
\draw[->] (1) to node [left] {$a \otimes D$} (5);
	\end{tikzpicture}
\end{equation}	
\item For every two objects $A, B$, there exist three 2-isomorphisms called 2-unitors Equation \ref{fig:2-unitors}. They are modifications, hence, subject to the modification condition \ref{modification-square}.
\begin{equation}\label{fig:2-unitors}
	\begin{tikzpicture}
		\filldraw[white,fill=brown,fill opacity=0.5](0,2)--(3,1)--(0,0)--cycle;
		\node (0) at (0,2) {$(I A) B$};
		\node (1) at (3,1) {$I (A  B)$};
		\node (2) at (0,0) {$A  B$};
		\node (3) at (1,1) {$\Rightarrow \lambda$};
		\draw[->] (1)  to node [below right] {$l$} (2);
		\draw [->] (0) to node [above right] {$a$} (1);
		\draw[->] (0) to node [below, sloped] {$l \otimes B$} (2);
	\end{tikzpicture}
	\begin{tikzpicture}
		\filldraw[white,fill=brown,fill opacity=0.5](0,2)--(3,1)--(0,0)--cycle;
		\node (0) at (0,0) {$A  B$};
		\node (1) at (3,1) {$A (I B)$};
		\node (2) at (0,2) {$(A I)  B$};
		\node at (1,1) {$\Rightarrow \mu$};
		\draw [->] (1) to node [ below, sloped] {$A \otimes  l$} (0);
		\draw [->] (2) to node [above] {$a$} (1);
		\draw [->] (2) to node [below, sloped] {$r \otimes B$} (0);		
	\end{tikzpicture}
	\begin{tikzpicture}
		\filldraw[white,fill=brown,fill opacity=0.5](0,2)--(3,1)--(0,0)--cycle;
		\node (0) at (0,2) {$(A  B)  I$};
		\node (1) at (3,1) {$A  (B  I)$};
		\draw [->] (0) to node [above right] {$a$} (1);
		\node (2) at (0,0) {$A  B$};
		\draw [->] (2) to node [below, sloped] {$A \otimes r$} (1);
		\draw [->] (0) to node [left] {$r$} (2);
		\node at (1,1) {$\Rightarrow \rho$};
	\end{tikzpicture}
\end{equation}
\end{enumerate}
\begin{rem}\label{modified-tensor}
Before listing the conditions on data, we should notice that the existence of tensorators changes the tensor product of an object with a 2-morphism. We denote the new tensor product with $\hat{\otimes}$. We show a more comprehensible example ﬁrst, then restate KV's example Figure \ref{fig:kv-exp} for $\pi_{A, B, C, D} \hat{\otimes} E$. 
\begin{itemize}
\item 
Assume that there exists a 2-morphism $\alpha: gf \Rightarrow h$,  now if we tensor each object from left with an object $A$, the filling 2-morphism will not be modified as
$A \otimes \alpha$, but it will be $(A \otimes \alpha) \circ \phi_{A,g,A,f}$.
\begin{center}
\begin{tikzpicture}
\node (0) at (0, 0) {$A \otimes B$};
\node (1) at (3, 0) {$A \otimes B^\prime$};
\node (2) at (6, 0) {$A \otimes B^{\prime \prime}$};
\node[rotate=270] at (3, -0.5) {$\Rightarrow$};
\node  at (3.8, -0.5) {$\phi_{A, g, A, f}$};
\node[rotate=270] at (3, -1.7) {$\Rightarrow$};
\node  at (3.8, -1.7) {$A \otimes \alpha$};

\draw[->] (0) to node[above]{$A \otimes f$} (1);
\draw[->] (1) to node[above]{$A \otimes g$} (2);
\draw[->, bend right= 30] (0) to node[below]{$A \otimes gf$} (2);
\draw[->, bend right= 80] (0) to node[below]{$A \otimes h$} (2);
\end{tikzpicture}
\end{center}
\item Now take the example presented by Kapranov and Voevodsky Figure \ref{fig:kv-exp}: consider the pentagonator if each object is tensored by an object $E$ from right. One should observe that the filling 2-morphism is $\pi_{A, B, C, D} \hat{\otimes} E$ which is different from $\gamma = \phi \otimes E$. 
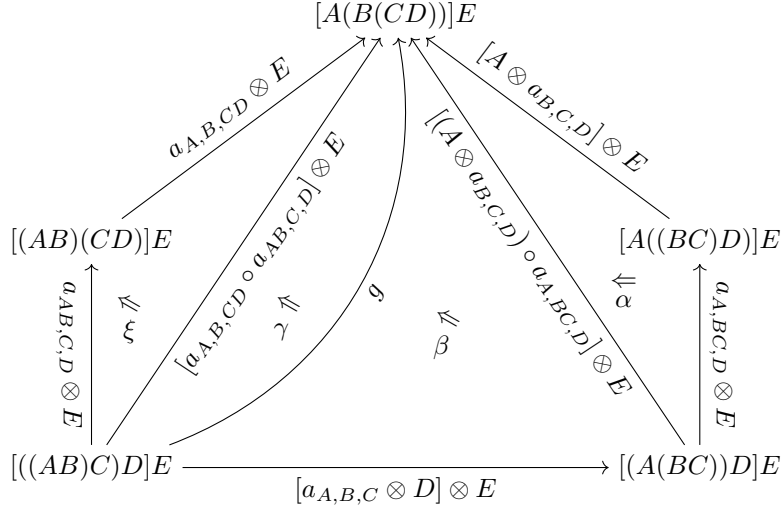
\begin{figure}[h]
\centering
\begin{tikzpicture}
		\node (1) at (0, 0) {$[((AB)C)D]E$};
		\node (2) at (8, 0) {$[(A(BC))D]E$};
		\node (3) at (0, 3) {$[(AB)(CD)]E$};
		\node (4) at (8, 3) {$[A((BC)D)]E$};
		\node (5) at (4, 6) {$[A(B(CD))]E$};
		\node[rotate=180] (6) at (7, 2.5){$\Rightarrow$};
		\node (7) at (7, 2.2){$\alpha$};
		\node[rotate=150] (8) at (4.7, 2){$\Rightarrow$};
		\node (9) at (4.6, 1.6){$\beta$};
		\node[rotate=150] (10) at (2.6, 2.2){$\Rightarrow$};
		\node (11) at (2.5, 1.8){$\gamma$};
		\node[rotate=150] (12) at (0.5, 2.2){$\Rightarrow$};
		\node (13) at (0.5, 1.8){$\xi$};
		
		\draw[<-] (2) to node[below, sloped]{$[a_{A, B, C} \otimes D]\otimes E$} (1);
		\draw[<-] (3) to node[below, sloped]{$a_{AB, C, D} \otimes E$}(1);
		\draw[<-] (5) to node[above, sloped]{$a_{A, B, CD}\otimes E$}(3);
		\draw[<-] (5) to node[above, sloped]{$[A \otimes a_{B, C, D}] \otimes E$}(4);
		\draw[<-] (4) to node[above, sloped]{$a_{A, BC, D} \otimes E$}(2);
		\draw[<-] (5) to node[below, sloped]{$[(A \otimes a_{B,C,D}) \circ a_{A,BC,D}] \otimes E$}(2);
		\draw[<-, bend left=40] (5) to node[below, sloped]{$g$}(1);
		\draw[<-] (5) to node[below, sloped]{$[a_{A,B,CD}\circ a_{AB,C,D} ] \otimes E$}(1);
	\end{tikzpicture}
	\caption{~The missing 1-morphisms in the figure is
	$g = [(A \otimes a_{B,C,D}) \circ a_{A,BC,D} \circ (a_{A,B,C} \otimes D)] \otimes E$, and the missing 2-morphisms are $\alpha = \phi_{A\otimes a_{B,C,D},a_{A,BC,D},E}$, $\gamma = \pi \otimes E$, $\beta = \phi_{[(A \otimes a_{B, C, D}) \circ a_{A, BC, D}], E, [a_{A, B, C} \otimes D], E}$, $\xi = \phi^{-1}_{a_{A,B,CD},E,a_{AB,C,D},E}$.}\label{fig:kv-exp}
\end{figure}
\end{itemize}
\end{rem}
\end{defi}
\begin{rem}
	As mentioned briefly in the discussion, Stay did not use the modified tensor product. Since the paper assumes tensorators are identity 2-morphisms. However, in the following, we shall work with the modified tensor product when it is necessary.
\end{rem}
\subsection{Condition on Data}\label{sec:conditions}
Each of the data above should further satisfy some conditions either due to naturality or axioms of 2-transformation or modification. However, similar to monoidal categories, in addition to the conditions arising from the nature of the data, monoidal 2-categories are further subject to two axioms called Stasheﬀ and unit polytopes. Due to the coherence theorems, these two conditions are enough and no more conditions are required.
\subsubsection{Naturality Conditions}\label{sec:natural}
\begin{enumerate}
\item \textbf{Naturality of 1-associators}: weakening 2-isomorphisms $\alpha_{f,B,C}$ for 1-associators should be natural in its indices $f, B, C$.
\begin{itemize}
	\item  \underline{Naturality in 1-Morphism $f$}: if $\gamma :f \Rightarrow f^\prime$, the cylinder shown in Equation \ref{fig:nat-ass1} commutes. Similar cylinders when 1-morphism is the ﬁrst or second index should commute.
	\item \underline{Naturality in Object $B$}: to check the naturality in $B$, assume another object $B^\prime$, 1-morphisms $f: A \longrightarrow A^\prime$ and $g: B \longrightarrow B^\prime$, then, the naturality squares of $\alpha$ result in the white squares depicted in the cube \ref{fig:nat-1-ass2}. Gluing squares results in the cube.
	\item \underline{Axiom \ref{eq:transformation1} of naturality}: for any composable pair of 1-morphisms $A \xrightarrow{f} A^\prime \xrightarrow{f^\prime} A^{\prime \prime}$, Diagram \ref{fig:axiom-ass} commutes.
\end{itemize}
\item \textbf{Naturality of 1-unitors}: weakening 1-unitors $l_f$ and $r_f$ need to satisfy naturality conditions and Axiom \ref{eq:transformation1}. We only present the conditions for the left unitors $l_f$ . Corresponding conditions should hold for the right unitors $r_f$ .
\begin{itemize}
\item \underline{Naturality in 1-Morphism $f$}: 
the left and right 1-unitors are natural, and their natural 2-isomorphisms $l_f$ and $r_f$ are indexed by a 1-morphism.  Hence, to check naturality of $l_f$ and $r_f$ we alternate $f$; that is, for $\gamma: f \Rightarrow f^\prime$, the cylinder shown in Figure \ref{fig:nat-unit1} commutes. 
\item \underline{Axiom \ref{eq:transformation1} of naturality:} For any composable pair of 1-morphisms $A \xrightarrow{f} A^\prime \xrightarrow{f^{\prime}} A''$, the triangle prism shown in Figure \ref{fig:nat-unit2} commutes. 
\end{itemize}
\item \textbf{Naturality of tensorators:} Observing and finding appropriate diagrams are easier if one tries to alternate indices of KV's tensorators Section \ref{sec:tensorators}. We list three classes of them and explicitly mention how you can find these diagrams. One might prefer to check Section \ref{sec:tensorators} before proceeding further. 
\begin{itemize}
\item The first condition is obtained by checking the naturality of tensorator $\otimes_{f,g}$ in $f$ and $g$ (this is the symbol that KV used). If one alternates $f$
or $g$ in $\otimes_{f,g}$ or equally in $\phi^{-1}_{A^\prime, g, f, B} \odot \phi_{f, B^\prime, A, g}$, for $\gamma : f \longrightarrow f^\prime$, the cylinder in Figure \ref{fig:nat-tensor1} should commute.
\item If one alternates one of the 1-morphisms of $\otimes_{g, f, B}$ or in $\phi_{g, B, f, B}$, then for every $\alpha$ and $g$, 
\begin{equation}
	\begin{tikzpicture}
		\node (1) at (0, 0) {$A$};
		\node (2) at (2, 0) {$A^\prime$};
		\node (3) at (4, 0) {$A^{''}$};
		\node (4) at (1, 0.5){};
		\node (5) at (1, -0.5){};
		\draw[->, bend right = 40] (1) to node[below]{$f^\prime$}(2);
		\draw[->, bend left=40] (1) to node[above]{$f$} (2);
		\draw[->] (2) to node[above]{$g$} (3);
		\draw[->, double] (4) to node[right]{$\alpha$} (5);
	\end{tikzpicture}
\end{equation}
Figure \ref{fig:nat-tensor2} commutes. 
\item Now if we check the naturality of $\otimes_{f^\prime , f, B}$ in Object $B$, for  $A \xrightarrow{f} A^\prime \xrightarrow{f^\prime } A^{''}$ and $g: B \longrightarrow B^\prime$, the triangle prism commutes Figure \ref{fig:nat-tensor3}.
\item \underline{Axiom \ref{eq:transformation1} for tensorators:} For any composable triplet of 1-morphisms $A \xrightarrow{f} A^\prime \xrightarrow{f^\prime} A^{''} \xrightarrow{f^{''}} A^{'''}$ and an object $B$,  Tetrahedron \ref{fig:axiom-tensor} commutes. 
\end{itemize}
\end{enumerate}
\subsubsection{Modification Data}\label{sec:modification}
Pentagonator and 2-unitors are modification. Hence, the axiom of modification \ref{modification-square} should hold for them. We present the details of obtaining the below diagrams from the modification square in Appendix \ref{sec:modification-data}.
\begin{itemize} 
\item \textbf{2-Unitors:} The modification square for every $f: A \longrightarrow A^\prime$  results in a two dimensional diagram. Gluing the boundaries of the 2-dimensional picture gives us the prism of Figure \ref{fig:modi-unit1} .
\item \textbf{Pentagonator:} for every 1-morphism $f: A \longrightarrow A^\prime $ and objects $B, C, D$ the pentagonal prism \ref{fig:modi-penta} commutes. 
\end{itemize} 
\subsubsection{Axioms}\label{sec:axioms}
\begin{itemize}
\item For every five objects, $(A, B, C, D, E)$,  Stasheff polytopes Figure \ref{fig:stashef} should commute.
\item For every triplet of objects $A, B, C$, the unit polytopes figures \ref{fig:unit-poly1} and \ref{fig:unit-poly2} commute.
\end{itemize}
\begin{defi}
A monoidal 2-category is \textbf{strict} if all weakening 1- and 2-morphisms are identities. It is \textbf{semi-strict} if all weakening 1- and 2-morphisms except tensorators $\phi$ are identities.
\end{defi}
\section{Comparison Between Definitions}\label{sec:compar}
\subsection{Isomorphism or (adjoint)equivalence}
Gordon, Power and Street\cite{Gordon} defined tricategories whose structure 1-morphisms are all equivalences. However, Gurski replaced them with adjoint equivalence. To recover KV’s definition, we make them even stronger, and substitute them
with isomorphisms.
\subsection{An extra data}\label{sec:extra_data}
KV’s definition included an extra piece of data which is unnecessary and can be obtained by 2-morphisms listed above. They defined a special 2-morphism between the left and right unitors for the unit object,
\begin{center}
\begin{tikzpicture}
	\node (0) at (0, 0) {$I \otimes I$};
	\node (1) at (3, 0) {$I$};
	\node[rotate = 270] (2) at (1.5, 0) {$\Rightarrow$};
	\node (3) at (2, 0) {$\epsilon$};
	\draw[->, bend left = 30] (0) to node[above]{$l_I$} (1); 
	\draw[->, bend right = 30] (0) to node[below]{$r_I$} (1); 
\end{tikzpicture}
\end{center}
such that it satisfies the diagram below, 
	\begin{figure}[H]
	\input{images/compat4.tikz}
	\label{Pastings}
\end{figure}
Although tiresome, one can write $\epsilon$ based on other 2-morphisms by taking the figure given on Page 58 of \cite{chris}. The cited figure is used to prove the equality of $r_I$ and $l_I$ for monoidal categories. All inner diagrams can be ﬁlled by weakening 2-morphisms to obtain the result. The figure above, regardless of considering $\epsilon$, commutes, which is shown by Gurski in Proposition 4.24 \cite{Gurski}. 
\subsection{Tensorators}\label{sec:tensorators}
Kapranov and Voevodsky enumerated three types of tensorators as data because he did not mention that the tensor product $\otimes$ is a 2-functor at heart. They are detonated as $\otimes_{f,g}$, $\otimes_{f^\prime,f,B}$ and $\otimes_{A,g^\prime,g}$ and defined in Figure \ref{kv_tensors}.
\begin{figure}[h]
	\centering
\begin{tikzpicture}
\node (0) at (0, 0) {$A^\prime \otimes B$};
\node (1) at (4, 0){$A^\prime \otimes B^\prime$};
\node (2) at (0, 2){$A \otimes B$};
\node (3) at (4, 2){$A \otimes B^\prime$};
\node[rotate= -135] (4) at (2, 1){$\Rightarrow$}; 
\node (5) at (2.3, 1.3){$\otimes_{f, g}$};
\draw[->] (2)  to node[below, sloped]{$f \otimes B$} (0);
\draw[->] (2) to node[above]{$A \otimes g$} (3);
\draw[->] (3) to node[above, sloped]{$f \otimes B^\prime$} (1);
\draw[->] (0) to node[below]{$A^\prime \otimes g$}(1);
\end{tikzpicture}
\\
\begin{tikzpicture}
\node (0) at (0, 0){$A \otimes B$}; 
\node (1) at (0, 3){$A^\prime \otimes B$}; 
\node (2) at (3, 3){$A^{\prime \prime} \otimes B$}; 
\node[rotate=-45] (3) at (1, 2){$\Rightarrow$}; 
\node (4) at (1, 2.5){$\otimes_{f', f, B}$};

\draw[->] (0) to node[above, sloped]{$f\otimes B$}(1);
\draw[->](1) to node[above]{$f'\otimes B$}(2);
\draw[->] (0) to node[below, sloped]{$f'f\otimes B$} (2); 
\end{tikzpicture}
\begin{tikzpicture}
	\node (0) at (0, 0){$A \otimes B$}; 
	\node (1) at (0, 3){$A \otimes B^\prime$}; 
	\node (2) at (3, 3){$A \otimes B^{\prime \prime}$}; 
	\node[rotate=-45] (3) at (1, 2){$\Rightarrow$}; 
	\node (4) at (1, 2.5){$\otimes_{A, g^\prime, g}$};
	
	\draw[->] (0) to node[above, sloped]{$A\otimes g$}(1);
	\draw[->](1) to node[above]{$A\otimes g^\prime$}(2);
	\draw[->] (0) to node[below, sloped]{$A\otimes g^\prime g$} (2); 
\end{tikzpicture}
\caption{KV's tensorators}\label{kv_tensors}
\end{figure}

We can translate them based on our tensorator $\phi$. For the first case, $\otimes_{f, g} =\phi^{-1}_{A^\prime, f, g, B} \odot \phi_{f, B^\prime, A, g} $. 
$$
(f \otimes B^\prime)\circ (A \otimes g) \xRightarrow{\phi_{f, B^\prime, A, g}} (f \circ A) \otimes (B^\prime \circ g ) = (A^\prime \circ f) \otimes (g \circ B)  \xRightarrow{\phi^{-1}_{A^\prime, f, g, B}} (A^\prime \otimes g )\circ (f \otimes B)$$
The second and third KV's tensorators are labelled by two 1-morphisms and an object, based on $\phi$, they are $\otimes_{f', f, B} = \phi_{f', B, f, B}$ and $\otimes_{A, g', g} = \phi_{A, g', A, g}$.  
\subsection{Consequence of $\phi_{A, B} = 1$}\label{sec:consequence}
1-Associators and 1-unitors apart from Axiom \ref{eq:transformation1} should further satisfy Axiom \ref{eq:transformation2}, which seemingly we have ignored throughout the paper. This axiom is built on the assumption that $id_{A \otimes B}$ and $id_A \otimes id_B$ are isomorphic not equivalent, $id_{A\otimes B} \overset{\phi_{A, B}}{\cong} id_A \otimes id_B$. However, to obtain Kapranov and Voevodsky’s definition, here we assume $\phi_{A,B}$ are identities, which results in the proceeding corollaries.
\begin{figure}
	\centering
	\begin{tikzpicture}
	\node (0) at (0, 0) {$(AB)C$};
	\node (1) at (4, 0) {$A(BC)$};
	\node (2) at (0, 4) {$(AB)C$};
	\node (3) at (4, 4) {$A(BC)$};
	\node[red, rotate=135] (4) at (1.5, 2.5){$\Rightarrow$}; 
	\node[red] (5) at (1.5, 2.9){$l_{a_{A, B, C}}$}; 
	\node[blue, rotate=135] (6) at (1.5, 2){$\Rightarrow$}; 
	\node[blue] (7) at (1.5, 1.5){$\alpha_{id_{A, B, C}}$};
	\node (8) at (4.6, 2){$\Rightarrow$}; 
	\node (8) at (4.6, 2.3){$\gamma$}; 
	\node (8) at (-0.8, 2){$\Rightarrow$}; 
	\node (8) at (-0.8, 2.3){$\beta$};

	\draw[->] (0) to node[below, sloped]{$a_{A, B, C}$}(1);
	\draw[->, blue] (0) to node[above, sloped]{$id_A \otimes (id_B \otimes id_C)$} (2);
	\draw[->, red] (1) to node[above, sloped]{$id_{A(BC)}$} (3);
	\draw[->] (2) to node[above, sloped]{$a_{A, B, C}$}(3);
	\draw[->, bend left=60, red] (0) to node[above, sloped]{$id_{(AB)C}$}(2);
	\draw[->, bend right = 60, blue] (1) to node[below, sloped]{$id_A \otimes (id_B \otimes id_C)$}(3);
\end{tikzpicture}\label{fig:naturality}
\caption{Figure of naturality section}
\end{figure}
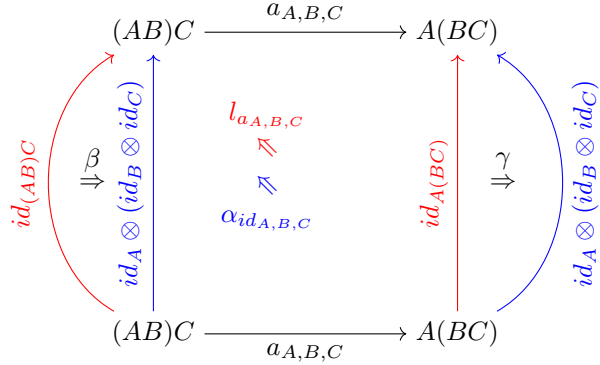
\begin{cro}
In a monoidal 2-category with the definition given above, 2-unitors are identity 2-morphisms if they are indexed by identity 1-morphisms.
\end{cro}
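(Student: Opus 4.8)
The plan is to read the statement as a special case of the unit axiom for pseudonatural transformations, made trivial by the standing hypothesis $\phi_{A,B}=1$. Here the relevant $2$-unitors are the weakening $2$-isomorphisms $l_f$ and $r_f$ carried by the left and right $1$-unitors, since these are exactly the unitor data that carry a $1$-morphism index, and the claim is about setting that index $f$ to an identity. Recall that each $1$-unitor is the $1$-cell part of a transformation between two $2$-functors: $l$ runs from $I\otimes-$ to $\mathbf{Id}$ and $r$ runs from $-\otimes I$ to $\mathbf{Id}$, with $l_f$, $r_f$ the $2$-cells filling the naturality squares. These $2$-cells are constrained by Axiom \ref{eq:transformation1} (compatibility with composition) and Axiom \ref{eq:transformation2} (compatibility with identities), and it is the latter that I would use. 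So the first step is to write out Axiom \ref{eq:transformation2} for $l$ at a single object $A$, which expresses the component $l_{id_A}$ as a pasting of $1_{l_A}$ with the unit constraints of the source $2$-functor $I\otimes-$ and of the target $2$-functor $\mathbf{Id}$.

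Next I would identify those two unit constraints explicitly. The target $\mathbf{Id}$ is strictly unital, so its unit constraint is the identity $2$-cell on $id_A$. The source $I\otimes-$ is a $2$-functor only up to the tensorator data, and its failure to preserve identities at $A$ is precisely the component $\phi_{I,A}\colon id_I\otimes id_A\Rightarrow id_{I\otimes A}$. Thus the only possibly nontrivial ingredient entering the identity axiom for $l_{id_A}$ is $\phi_{I,A}$ (or its inverse, depending on the orientation convention of the axiom). Invoking the hypothesis $\phi_{A,B}=1$, and in particular $\phi_{I,A}=1$, the pasting collapses and yields $l_{id_A}=1_{l_A}$. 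Running the same argument with $-\otimes I$ in place of $I\otimes-$, whose identity comparison at $A$ is $\phi_{A,I}$, gives $r_{id_A}=1_{r_A}$. As $1_{l_A}$ and $1_{r_A}$ are identity $2$-morphisms, this is the assertion.

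The step I expect to be the genuine obstacle is the bookkeeping of the second point: verifying that the unit constraint of the derived $2$-functor $I\otimes-$ really is the component $\phi_{I,A}$ of the tensorator-unit datum, and placing it with the correct orientation inside Axiom \ref{eq:transformation2}. One must unwind how $I\otimes-$ inherits its pseudofunctor structure from $\otimes$ together with the chosen unit object, so that the comparison $2$-cell attached to $id_A$ is exactly $\phi_{I,A}^{\pm 1}$; every other cell in the axiom is already an identity because $\mathcal{C}$ is a strict $2$-category. Fortunately the conclusion is insensitive to this orientation ambiguity, since $\phi_{I,A}=1$ is its own inverse. Finally, I would remark that the identical computation applied to the $1$-associator produces $\alpha_{id_A,B,C}=1$ together with its permutations, which is what the figure of this subsection depicts; this is not needed for the corollary, but it confirms that the mechanism behind it, namely Axiom \ref{eq:transformation2} combined with $\phi_{A,B}=1$, is uniform across all weakening $2$-cells carrying a $1$-morphism index.
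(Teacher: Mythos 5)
Your proposal is correct and takes essentially the same route as the paper: its proof instantiates Axiom \ref{eq:transformation2} with $F = -\otimes I$, $G = \mathbf{Id}$, $\sigma_A = r_A$ and $\sigma_f = r_f^{-1}$, reads off $r_{id_A} = 1 * \phi_{A,I}$, and concludes $r_{id_A} = 1$ from the standing assumption $\phi_{A,B} = 1$ --- precisely the mechanism you describe, with your orientation caveat resolved in the paper by taking $\sigma_f = r_f^{-1}$. Your explicit identification of the unit comparison of $I \otimes -$ as $\phi_{I,A}$ and your parallel treatment of $l_f$ merely spell out what the paper leaves implicit.
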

\begin{proof}
In Equation \ref{eq:transformation2}, let $F = - \otimes I$ and $G = Id$, $\sigma_A = r_A$ and $\sigma_f = r^{-1}_f$. We have $r_{id_A} = (1 * \phi_{A, I})$, which means $r_{id_A} = 1$.
\end{proof}
\begin{cro}
In a monoidal 2-category with the definition given above, 2-associators are identities if at least one of their indices is identity 1-morphism.
\end{cro}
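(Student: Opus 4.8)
The plan is to reproduce, for the associativity transformation, the argument that handled the 2-unitors in the preceding corollary, the only inputs being Axiom \ref{eq:transformation2} together with the standing assumption $\phi_{A,B}=1$ of Section \ref{sec:consequence}. Recall that the 1-associator $a$ is a (pseudo)natural transformation between the two composite 2-functors $F=\otimes(\otimes\times\mathbf{Id})$ and $G=\otimes(\mathbf{Id}\times\otimes)$ from $\mathcal{C}^3$ to $\mathcal{C}$, and that its 2-dimensional components $\alpha_{f,B,C}$, $\alpha_{A,g,C}$, $\alpha_{A,B,h}$ are exactly the weakening 2-cells attached to a 1-morphism sitting in a single slot. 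First I would instantiate Axiom \ref{eq:transformation2} with $\sigma=a$, $\sigma_{A,B,C}=a_{A,B,C}$ and $\sigma_{(f,g,h)}=\alpha_{(f,g,h)}$, and specialize the active 1-morphism to an identity, say $f=id_A$ in the first slot, the other two slots being treated symmetrically.

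Axiom \ref{eq:transformation2} expresses the value of a transformation's 2-cell on an identity 1-morphism purely through the unit (identity-preservation) constraints of its source and target 2-functors. The second step is therefore to compute these unit constraints for $F$ and $G$. Both are built by composing and multiplying the tensor 2-functor $\otimes$ with identity 2-functors, and the unit constraint of such a composite is the pasting of the unit constraints of its factors: for $F$ one pastes the inner constraint $\phi_{A,B}$ with the outer constraint $\phi_{A\otimes B, C}$, and for $G$ one pastes $\phi_{B,C}$ with $\phi_{A, B\otimes C}$, the $\mathbf{Id}$-coordinate contributing a strict identity in each case. Under the assumption that every $\phi_{A,B}$ is an identity 2-morphism, each of these pastings collapses, so the unit constraints $F^0$ and $G^0$ of both composite 2-functors are identities.

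Feeding $F^0=G^0=1$ back into Axiom \ref{eq:transformation2} then forces $\alpha_{id_A,B,C}=1$, exactly as $r_{id_A}=1$ was obtained from $r_{id_A}=(1*\phi_{A,I})$ in the unitor case; the same specialization in the remaining two slots yields $\alpha_{A,id_B,C}=1$ and $\alpha_{A,B,id_C}=1$. For a general 2-cell $\alpha_{(f,g,h)}$ one then factors the 1-morphism $(f,g,h)$ of $\mathcal{C}^3$ through the three one-slot morphisms and uses (pseudo)functoriality of $a$ to express $\alpha_{(f,g,h)}$ as a pasting of single-slot components, so that any slot carrying an identity contributes trivially.

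The step I expect to be the genuine obstacle is the bookkeeping of the second paragraph: one must verify that the unit constraint of the composite $\otimes(\otimes\times\mathbf{Id})$ really is the asserted pasting, in the asserted order, and that the product 2-functor $\otimes\times\mathbf{Id}$ contributes $\phi$ only in its active coordinate and a strict identity in the $\mathbf{Id}$ coordinate. This is the same kind of coherence unwinding that Appendix \ref{sec:2-functor} records, and once it is in place the collapse under $\phi_{A,B}=1$ is immediate and introduces no computation beyond that of the 2-unitor corollary.
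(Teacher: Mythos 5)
Your proposal is correct and takes essentially the same route as the paper: the paper's proof likewise amounts to Axiom \ref{eq:transformation2} applied to $a$ as a transformation between the composites $\otimes(\otimes\times\mathbf{Id})$ and $\otimes(\mathbf{Id}\times\otimes)$, whose unit constraints are exactly your pastings $\beta = \phi_{AB,C}\odot(\phi_{A,B}\otimes C)$ and $\gamma = \phi_{A,BC}\odot(A\otimes\phi_{B,C})$, so that $\alpha_{id_A,B,C} = (a_{A,B,C}\circ\beta^{-1})\odot(\gamma\circ a_{A,B,C})$ collapses to the identity once $\phi_{A,B}=1$. Your final paragraph on factoring a general $(f,g,h)$ through single-slot morphisms goes slightly beyond what the paper records but does not change the argument.
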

\begin{proof}
Check Figure \ref{fig:naturality}, if  $\beta = \phi_{AB, C} \odot (\phi_{A, B} \otimes C)$ and $\gamma = \phi_{A, BC} \odot (A \otimes \phi_{B, C})$. 
The blue square is naturality and the filling 2-associator is $(a_{A, B, C} \circ \beta^{-1}) \odot (\gamma \circ a_{A, B, C}) = \alpha_{id_A, B, C}$. If one lets $\phi_{A,B}$ identity, then 2-associators become identities if at least one of their indices is identity. 

\end{proof}
\begin{rem}
Note that since we are working with 2-categories not bicategories, we easily let $a_{A,B,C} \circ id_{(AB)C} = id_{A(BC)} \circ a_{A,B,C} = a_{A,B,C}$.
\end{rem}
\bibliographystyle{Apalike}

\newpage
\appendix 
\section{Figures}\label{sec:figures}
\subsection{Diagrams of Naturality}
\subsubsection{Naturality of 1-Associators}\label{sec:nat-1-ass}
\noindent
\begin{itemize}
\item \underline{Naturality of 1-associators in 1-morphism $f$}
\begin{equation}\label{fig:nat-ass1}
	\input{images/alpha-natural-3d.tikz}
\end{equation}
\input{images/alpha-natural-cyl.tikz}
\item \underline{Naturality of 1-associators in Object $B$}
the missing 2-morphisms in the blue squares are $\beta = 
\phi^{-1}_{A', g\otimes C, f, B\otimes C}\odot \phi^{-1}_{g, c, B, C} \odot (f \otimes \phi_{B', c, g, C}) \odot \phi_{f, B'\otimes C, A, g\otimes C}
$ and $\gamma = (\phi_{A', g, f, B}^{-1} \odot \phi_{f, B', A, g}) \hat{\otimes} C$. 
\begin{equation}
\input{images/naturalityinB.tikz}
\end{equation}
\begin{equation*}
\input{images/naturalityinB2.tikz}
\end{equation*}
\begin{equation}\label{fig:nat-1-ass2}
	\input{images/naturalityinB3.tikz}
\end{equation}
\newpage
\item \underline{Axiom \ref{eq:transformation1} of naturality:} let $F(A, B, C) = (AB)C$, $G(A, B, C) = A(BC)$, $\sigma_{A,B,C} = a_{A,B,C}$, $\sigma_f = \alpha_{f,B,C}$ and $\kappa = \psi = \phi$, it is straightforward to ﬁrst obtain the planar diagrams, then from shared boundaries glue them to obtain the 3-dimensional diagram of Kapranov and Voevodsky in Page 219. The missing 2-morphisms in the blue triangles, $\beta = \phi_{f', BC, f, BC}$ and $\gamma = \phi_{f', B, f, B} \hat{\otimes} C$.
\begin{equation*}
\input{images/compat.tikz} 
\end{equation*}
\begin{equation}\label{fig:axiom-ass}
\input{images/compat11.tikz}
\end{equation}
\end{itemize}
\subsubsection{Naturality of 1-Unitors}
\begin{itemize}
\item \underline{Naturality of 1-unitors in 1-morphism $f$:}
\begin{equation*}
\input{images/naturalityofl.tikz}
\end{equation*}
\begin{equation}\label{fig:nat-unit1}
\input{images/naturalityofl2.tikz}
\end{equation}
\item \underline{Axiom \ref{eq:transformation1} for unitors} let $GA = A$, $FA = IA$, $\sigma_A = l_A$, $\sigma_f = l_f$, $\kappa =1$, $\psi = \phi$.
\begin{equation*}
	\input{images/compat3.tikz}
\end{equation*}
\begin{equation}\label{fig:nat-unit2}
	\input{images/triangle-prism.tikz}
\end{equation}
\end{itemize}
\subsubsection{Naturality of Tensorators}
\begin{itemize}
\item \underline{Naturality of tensorator 1} The missing 2-morphism is $\beta = \phi^{-1}_{A', g, f, B} \odot \phi_{f, B', A, g}$, $\eta = \phi^{-1}_{A', g, f', B} \odot \phi_{f', B', A, g}$. 
\begin{equation*}
\input{images/naturalityoftens.tikz}
\end{equation*}
\begin{equation}\label{fig:nat-tensor1}
\input{images/naturalityoftens11.tikz}
\end{equation}
\item \underline{Naturality of tensorator 2:}
\begin{equation}\label{fig:nat-tensor2}
\input{images/naturalityoftens2.tikz}
\end{equation}
\item \underline{Naturality of tensorator 3:} the missing 2-morphisms in the figure \ref{fig:nat-tensor3} are 
$
\xi = \phi_{f', B, f, B}, \beta = \phi_{f', B', f, B'}, \gamma = \phi^{-1}_{A'', g, f'f, B} \odot \phi_{f'f, B', A, g}, \eta= \phi^{-1}_{A'', g, f', B} \odot \phi_{f', B', A', g}, \lambda = \phi^{-1}_{A', g, f, B}\odot \phi_{f, B', A, g}$.

\begin{equation*}
\input{images/naturalityoftens3.tikz}
\end{equation*}
\begin{equation}\label{fig:nat-tensor3}
\input{images/natualityoftens31.tikz}
\end{equation}
\item \underline{Axiom \ref{eq:transformation1} of naturality} Let $F(A) = AB$. The filling 2-morphisms are $\lambda = \phi_{f^\prime,B,f,B}$, $\beta = \phi_{f^{\prime\prime},B,f^\prime f,B}$,  $\gamma = \phi_{f^{\prime\prime}f^\prime,B,f,B}$, $\xi = \phi_{f^{\prime\prime},B,f^\prime,B}$. 
\begin{equation*}
	\input{images/compat2.tikz}
\end{equation*}
\begin{equation}\label{fig:axiom-tensor}
	\input{images/compat21.tikz}
\end{equation}
\end{itemize}
\subsection{Diagrams of Modification}
\subsubsection{2-Unitors}
\underline{Modification condition for 2-unitors}: We only describe the condition on $\eta$, other ones are similar. The missing 2-morphism is $\gamma = \phi^{-1}_{A^\prime, l_{B}, f, IB} \odot \phi_{f, B, A, l_B}$. 
\begin{equation*}
\input{images/naturalityof2-unit1.tikz} 
\end{equation*}
\begin{equation}\label{fig:modi-unit1}
\input{images/naturalityof2-unit11.tikz}
\end{equation}
\subsubsection{Pentagonator}
Modification condition for pentagonator: the 2-isomorphism filling the front square of 3D picture is
$
\gamma = \phi^{-1}_{A^\prime, a_{B, C, D}, f, ((BC)D)} \odot 
\phi_{f, B(CD), A, a_{B, C, D}}
$
\begin{equation*}
\input{images/pi-natural-1.tikz}
\end{equation*}
\begin{equation}\label{fig:modi-penta}
	\input{images/pi-natural-2.tikz}	
\end{equation}
\input{images/pi-natural-3.tikz}
\subsection{Axioms}
\subsubsection{Stasheff Polytope}
\underline{Stasheff Polytope for pentagonator:} To draw this diagram, we used the coordinates of Stasheff Polytope in Stay's paper \cite{Stay}.
\begin{equation}\label{fig:stashef}
\input{images/stasheff.tikz}
\end{equation}
\input{images/stasheff2.tikz}
\subsubsection{Unit Polytopes}
\begin{itemize}
\item \underline{Unit polytope I}
\begin{equation*}
\input{images/polytope.tikz}
\end{equation*}
\begin{equation}\label{fig:unit-poly1}
\input{images/polytope2.1.tikz}
\end{equation}
\input{images/polytope2.2.tikz}
\item \underline{Unit polytope II}
\begin{equation}\label{fig:unit-poly2}
\input{images/poly.tikz}
\end{equation}
\end{itemize}
\section{Definition of 2-Functor}\label{sec:2-functor}
Leinster \cite{Leinster} presented a definition for bifunctor, when source and target are bicategories rather than 2-categories. Since we work with 2-categories, we modify the definition as presented below. We illustrate similar diagrams as Leinster and only add $=$ instead of isomorphism. 
\begin{defi}
A 2-functor $F: \mathcal{A} \longrightarrow \mathcal{B}$ consists of the following data which satisfy the two axioms given below. 
\begin{itemize}
	\item function $F:obj(\mathcal{A}) \longrightarrow \mathcal{B}$
	\item functors $F_{AB}: \mathcal{A}(A, B) \longrightarrow \mathcal{B}(A, B)$
	\item natural transformations $\phi_{g, f}: Fg \circ Ff \longrightarrow F(g\circ f)$, $\phi_A: id_{FA} \longrightarrow F(id_A)$
	\item \textbf{Axiom 1}: 
	\begin{equation}\label{tensorator:axiom1}
		\input{images/2functor1.tikz}
	\end{equation}
	\item \textbf{Axiom 2}: 
\begin{equation}\label{tensorator:axiom2}
	\begin{tikzpicture}
\node (0) at (0, 0) {$Ff \circ id_{FA}$};
\node (1) at (3, 0) {$Ff\circ Fid_A$};
\node (2) at (6, 0) {$F(f \circ id_A)$};
\node (3) at (3, -2){$Ff$}; 
\draw[->] (0) to node[above]{$1*\phi_A$}(1); 
\draw[->] (1) to node[above]{$\phi$}(2);
\draw[->] (0) to node[below]{$r_F$}(3) ;
\draw[->] (2) to node[below]{$=$}(3); 
\end{tikzpicture}
\end{equation}
\end{itemize}
\begin{defi}
A transformation $\sigma: F\Rightarrow G$ where $(F, \kappa)$ and $(G, \psi)$ are morphisms, with data 1-morphism $\sigma_A: FA \longrightarrow GA$ and natural 2-isomorphism $\sigma_f : Gf \circ  \sigma_A  \Rightarrow \sigma_B \circ  Ff$, such that
\begin{equation}\label{eq:transformation1}
	\input{images/2functor3.tikz}
\end{equation}
and 
\begin{equation}\label{eq:transformation2}
	\input{images/2functor4.tikz}
\end{equation}
\end{defi}
\end{defi}
\section{Modification Data}\label{sec:modification-data}
\begin{defi}
A morphism between two natural morphisms $(\sigma, \tilde{\sigma})$ between two 2-functors is called modification $\Gamma$. For every object $A$, it consists of a 2-morphism $\Gamma_A$, subjects to the modification square \ref{modification-square} for every morphism $f$. 
\begin{equation}\label{modification-square}
\input{images/modification1.tikz}
\end{equation}
\end{defi}
\subsection{Details of Modification Diagram for 2-Unitors}
We only describe the details of our procedure for obtaining the prism for one of 2-unitors, namely, $\mu$, the recipe works for other 2-unitors as well. In square \ref{modification-square}, let $G(A, B) = (AI)B$ and $F(A, B) = AB$, $\sigma_{A, B} = (A \otimes l_B) a_{A, I, B}$ and $\tilde{\sigma}_{A, B} = r_A \otimes B$. Now, we find the edges of Diagram \ref{unitor-modification-square} and compose the 2-morphisms. 
\begin{equation}\label{unitor-modification-square}
	\input{images/unitor-modi1.tikz}
\end{equation}
\begin{itemize}
\item{\textbf{($\tilde{\sigma}_f \odot  (1_{Gf}* \mu_{A, B})$)}}\\
	\input{images/unitor-modi2.tikz}
\item{\textbf{$(\mu_{A, B}*1_{Ff})\odot \sigma_f$}}\\
	\input{images/unitor-modi3.tikz} \\
	\input{images/unitor-modi4.tikz}
\end{itemize}
\subsection{Details of Modification Diagram for Pentagonator}
To unpack the modification square, let 2-functors be $G(A, B, C, D) = ((AB)C)D$,
and $F(A, B, C, D) = A(B(CD))$, natural transformations are $\sigma_{A,B,C,D} = (A \otimes
a_{B,C,D})  a_{A,BC,D}  (a_{A,B,C} \odot D)$ and $\tilde{\sigma}_{A,B,C,D} = a_{A,B,CD} a_{AB,C,D}$.
\begin{equation}
\input{images/pent-modifi1.tikz}
\end{equation}
\begin{itemize}
	\item{\textbf{($\tilde{\sigma}_f \odot  (1_{Gf}* \mu_{A, B})$)}}\\
	\input{images/pent-modifi3.tikz} 
	\item{\textbf{$(\mu_{A, B}*1_{Ff})\odot \sigma_f$}}\\
	\input{images/pent-modifi4.tikz}
\end{itemize}
\end{document}